\documentclass[reqno]{amsart}

\usepackage{amsmath}
\usepackage{amssymb}
\usepackage{verbatim}
\usepackage{enumerate}
\usepackage[all]{xy}
\usepackage{amsthm}
\usepackage{amscd}
\usepackage{amsfonts}
\usepackage{latexsym}
\usepackage{amscd}
\usepackage{graphicx}
\usepackage{tikz}
\usepackage{setspace}
\usepackage{hyperref}
\usetikzlibrary{decorations.pathmorphing}

 \definecolor{darkgreen}{HTML}{336633}
 \definecolor{darkred}{HTML}{993333}
\makeatletter

\newcommand{\arxiv}[1]{\href{http://arxiv.org/abs/#1}{\tt
    arXiv:\nolinkurl{#1}}}

\def\bd#1{\text{\boldmath${#1}$}}
\newcommand{\jontodo}{\todo[inline,color=green!20]}

\newtheorem{theorem}{Theorem}[section]
\newtheorem{lemma}[theorem]{Lemma}

\newtheorem{corollary}[theorem]{Corollary} 
\theoremstyle{definition}  
\newtheorem{definition}[theorem]{Definition}

\@addtoreset{equation}{section}
\def\theequation{\arabic{section}.\arabic{equation}}

\newcommand{\End}{\operatorname{End}}
\newcommand{\wt}{\operatorname{wt}}
\newcommand{\Aut}{\operatorname{End}} 
\newcommand{\CatHom}{\mathcal{H}om}
\newcommand{\Hom}{\operatorname{Hom}} 
\newcommand{\HOM}{\operatorname{HOM}} 
\newcommand{\Mod}{\operatorname{-mod}}
\renewcommand{\mod}{\operatorname{-mod}_{l\!f}}
\newcommand{\Rep}{\operatorname{Rep}}
\newcommand{\id}{\text{id}}
\newcommand{\gl}{\mathfrak{gl}}
\newcommand{\g}{\mathfrak{g}}
\newcommand{\Z}{\mathbb{Z}}
\newcommand{\N}{\mathbb{N}}
\newcommand{\C}{\mathbb{C}}
\newcommand{\Q}{\mathbb{Q}}
\renewcommand{\k}{\Bbbk}
\newcommand{\eps}{\varepsilon}
\newcommand{\gr}{\operatorname{gr}}
\newcommand{\unit}{\mathds{1}}
\newcommand{\rev}{^{\operatorname{rev}}}
\newcommand{\rad}{{\operatorname{rad}}}
\def\la{\lambda}
\def\al{\alpha}
\def\be{\beta}
\def\ga{\gamma}
\def\Ga{\Gamma}
\def\De{\Delta}
\def\blob{{\pmb\circ}}
\def\U{{\mathcal U}}
\def\T{{\mathtt T}}

\hyphenation{theo-re-ti-cal group-theo-re-ti-cal
semi-sim-ple al-geb-ras di-men-sions sim-ple ob-jects
equi-va-lent pro-per-ties ca-te-go-ries ques-tion mo-dule
e-print auto-equi-valence equi-va-ri-an-ti-za-tion}

\allowdisplaybreaks

\begin{document}

\title[Kac-Moody $2$-categories]{\boldmath On the definition of Kac-Moody $2$-category}

\author[J. Brundan]{Jonathan Brundan}
\address{Department of Mathematics,
University of Oregon, Eugene, OR 97403, USA}
\email{brundan@uoregon.edu}

\thanks{2010 {\it Mathematics Subject Classification}: 17B10, 18D10.}
\thanks{Research
supported in part by NSF grant DMS-1161094.}

\begin{abstract}
We show that
the Kac-Moody $2$-categories defined by Rouquier and by Khovanov and Lauda are the same.
\end{abstract}

\maketitle  
\section{Introduction}

Assume that we are given
the following data:
\begin{itemize}
\item a (not necessarily finite) index set $I$;
\item integers $a_{ij}$ for each $i,j \in I$ such that
$a_{ii}  = 2$, $a_{ij} \leq 0$ for all $i \neq j$, and
$a_{ij} = 0$ if and only if $a_{ji} = 0$;
\item positive integers 
$d_i$ such that $d_i a_{ij} = d_j a_{ji}$
for all $i,j \in I$.
\end{itemize}
Thus $A = (a_{ij})_{i, j \in I}$ is a 
symmetrizable generalized Cartan matrix. Set $d_{ij} := -a_{ij}$ for short.
Fix also the additional data:
\begin{itemize}
\item a complex vector space
$\mathfrak{h}$;
\item linearly independent vectors
$\alpha_i \in \mathfrak{h}^*$ for each $i \in I$ called {\em simple roots};
\item linearly independent vectors 
$h_i \in \mathfrak{h}$ for each $i \in I$
such that $\langle h_i, \alpha_j \rangle =
a_{ij}$.
\end{itemize}
Let $\mathfrak{g}$ be
the associated Kac-Moody algebra 
with Chevalley generators $\{e_i, h_i, f_i\}_{i \in I}$ and 
Cartan subalgebra $\mathfrak{h}$.
Let $P := \{\lambda \in \mathfrak{h}^*\:|\:\langle h_i, \lambda\rangle \in \Z\text{
  for all }i \in I\}$ be its {\em weight lattice}.
In \cite{Rou}, Rouquier has
associated to $\mathfrak{g}$ a certain $2$-category
$\mathfrak{A}(\mathfrak{g})$, which we will denote instead by $\U(\mathfrak g)$.
It depends also on:
\begin{itemize}
\item a ground
ring $\k$;
\item 
units $t_{ij}\in\k^\times$ for $i,j \in I$
with $i \neq j$ such that
$t_{ij} = t_{ji}$ if $d_{ij}=0$;
\item
scalars $s_{ij}^{pq} \in \k$ for $i,j \in I$ and $0 
\leq p < d_{ij}, 0 \leq q < d_{ji}$
such that $s_{ij}^{pq} = s_{ji}^{qp}$.
\end{itemize}
(Often one assumes further that these parameters are {\em homogeneous} in
the sense that $s_{ij}^{pq} = 0$ unless $p d_i + q d_j = d_i d_{ij}$,
but we do not insist on that here.)
The following is the definition from \cite[$\S$4.1.3]{Rou} formulated in
diagrammatic terms.

\begin{definition}\label{def1}
The {\em Kac-Moody $2$-category}
$\U(\mathfrak{g})$
is the strict additive $\k$-linear $2$-category
with object set $P$,
generating 
$1$-morphisms 
$E_i 1_\lambda:\lambda \rightarrow \lambda+\alpha_i$ and
$F_i 1_\lambda:\lambda \rightarrow \lambda-\alpha_i$ for each $i \in I$ and
$\lambda \in P$, and generating
$2$-morphisms
$x:E_i 1_\lambda \rightarrow E_i 1_\lambda,
\tau:E_i E_j 1_\lambda \rightarrow E_j E_i 1_\lambda$,
$\eta:1_\lambda \rightarrow F_i E_i 1_\lambda$ and
$\eps:E_i F_i 1_\lambda \rightarrow 1_\lambda$,
subject to certain relations.
To record these, 
we adopt a diagrammatic formalism 
like in \cite{KL3}, representing the identity $2$-morphisms
of $E_i 1_\lambda$ and $F_i 1_\lambda$ by
${\scriptstyle\substack{\lambda+\alpha_i\\\phantom{-}}}\substack{{\color{darkred}{\displaystyle\uparrow}} \\
  {\scriptscriptstyle i}}{\scriptstyle\substack{\lambda\\\phantom{-}}}
$ and ${\scriptstyle\substack{\lambda-\alpha_i
\\\phantom{-}}}\substack{{\color{darkred}{\displaystyle\downarrow}} \\
  {\scriptscriptstyle
    i}}{\scriptstyle\substack{\lambda\\\phantom{-}}}$, respectively,
and the other generators by
\begin{align}\label{solid1}
x 
&= 
\mathord{
\begin{tikzpicture}[baseline = 0]
	\draw[->,thick,darkred] (0.08,-.3) to (0.08,.4);
      \node at (0.08,0.05) {$\color{darkred}\bullet$};
   \node at (0.08,-.4) {$\scriptstyle{i}$};
\end{tikzpicture}
}
{\scriptstyle\lambda}\:,
\qquad
\tau
= 
\mathord{
\begin{tikzpicture}[baseline = 0]
	\draw[->,thick,darkred] (0.28,-.3) to (-0.28,.4);
	\draw[->,thick,darkred] (-0.28,-.3) to (0.28,.4);
   \node at (-0.28,-.4) {$\scriptstyle{i}$};
   \node at (0.28,-.4) {$\scriptstyle{j}$};
   \node at (.4,.05) {$\scriptstyle{\lambda}$};
\end{tikzpicture}
}\:,
\qquad
\eta
= 
\mathord{
\begin{tikzpicture}[baseline = 0]
	\draw[<-,thick,darkred] (0.4,0.3) to[out=-90, in=0] (0.1,-0.1);
	\draw[-,thick,darkred] (0.1,-0.1) to[out = 180, in = -90] (-0.2,0.3);
    \node at (-0.2,.4) {$\scriptstyle{i}$};
  \node at (0.3,-0.15) {$\scriptstyle{\lambda}$};
\end{tikzpicture}
}\:,\qquad
\eps
= 
\mathord{
\begin{tikzpicture}[baseline = 0]
	\draw[<-,thick,darkred] (0.4,-0.1) to[out=90, in=0] (0.1,0.3);
	\draw[-,thick,darkred] (0.1,0.3) to[out = 180, in = 90] (-0.2,-0.1);
    \node at (-0.2,-.2) {$\scriptstyle{i}$};
  \node at (0.3,0.4) {$\scriptstyle{\lambda}$};
\end{tikzpicture}
}.
\end{align}
We stress that our diagrams are simply shorthands for
algebraic expressions built by horizontally and vertically composing
generators; they do not satisfy any  topological invariance other than the
``rectilinear isotopy'' implied by the
interchange law.
First, we have the {\em quiver Hecke relations}:
\begin{align}\label{qha}
\mathord{
\begin{tikzpicture}[baseline = 0]
	\draw[<-,thick,darkred] (0.25,.6) to (-0.25,-.2);
	\draw[->,thick,darkred] (0.25,-.2) to (-0.25,.6);
  \node at (-0.25,-.26) {$\scriptstyle{i}$};
   \node at (0.25,-.26) {$\scriptstyle{j}$};
  \node at (.3,.25) {$\scriptstyle{\lambda}$};
      \node at (-0.13,-0.02) {$\color{darkred}\bullet$};
\end{tikzpicture}
}
-
\mathord{
\begin{tikzpicture}[baseline = 0]
	\draw[<-,thick,darkred] (0.25,.6) to (-0.25,-.2);
	\draw[->,thick,darkred] (0.25,-.2) to (-0.25,.6);
  \node at (-0.25,-.26) {$\scriptstyle{i}$};
   \node at (0.25,-.26) {$\scriptstyle{j}$};
  \node at (.3,.25) {$\scriptstyle{\lambda}$};
      \node at (0.13,0.42) {$\color{darkred}\bullet$};
\end{tikzpicture}
}
&=
\mathord{
\begin{tikzpicture}[baseline = 0]
 	\draw[<-,thick,darkred] (0.25,.6) to (-0.25,-.2);
	\draw[->,thick,darkred] (0.25,-.2) to (-0.25,.6);
  \node at (-0.25,-.26) {$\scriptstyle{i}$};
   \node at (0.25,-.26) {$\scriptstyle{j}$};
  \node at (.3,.25) {$\scriptstyle{\lambda}$};
      \node at (-0.13,0.42) {$\color{darkred}\bullet$};
\end{tikzpicture}
}
-
\mathord{
\begin{tikzpicture}[baseline = 0]
	\draw[<-,thick,darkred] (0.25,.6) to (-0.25,-.2);
	\draw[->,thick,darkred] (0.25,-.2) to (-0.25,.6);
  \node at (-0.25,-.26) {$\scriptstyle{i}$};
   \node at (0.25,-.26) {$\scriptstyle{j}$};
  \node at (.3,.25) {$\scriptstyle{\lambda}$};
      \node at (0.13,-0.02) {$\color{darkred}\bullet$};
\end{tikzpicture}
}
=
\left\{
\begin{array}{ll}
\mathord{
\begin{tikzpicture}[baseline = 0]
 	\draw[->,thick,darkred] (0.08,-.3) to (0.08,.4);
	\draw[->,thick,darkred] (-0.28,-.3) to (-0.28,.4);
   \node at (-0.28,-.4) {$\scriptstyle{i}$};
   \node at (0.08,-.4) {$\scriptstyle{j}$};
 \node at (.28,.06) {$\scriptstyle{\lambda}$};
\end{tikzpicture}
}
&\text{if $i=j$,}\\
0&\text{otherwise,}\\
\end{array}
\right.
\end{align}
\begin{align}
\mathord{
\begin{tikzpicture}[baseline = 0]
	\draw[->,thick,darkred] (0.28,.4) to[out=90,in=-90] (-0.28,1.1);
	\draw[->,thick,darkred] (-0.28,.4) to[out=90,in=-90] (0.28,1.1);
	\draw[-,thick,darkred] (0.28,-.3) to[out=90,in=-90] (-0.28,.4);
	\draw[-,thick,darkred] (-0.28,-.3) to[out=90,in=-90] (0.28,.4);
  \node at (-0.28,-.4) {$\scriptstyle{i}$};
  \node at (0.28,-.4) {$\scriptstyle{j}$};
   \node at (.43,.4) {$\scriptstyle{\lambda}$};
\end{tikzpicture}
}
=
\left\{
\begin{array}{ll}
0&\text{if $i=j$,}\\
t_{ij}\mathord{
\begin{tikzpicture}[baseline = 0]
	\draw[->,thick,darkred] (0.08,-.3) to (0.08,.4);
	\draw[->,thick,darkred] (-0.28,-.3) to (-0.28,.4);
   \node at (-0.28,-.4) {$\scriptstyle{i}$};
   \node at (0.08,-.4) {$\scriptstyle{j}$};
   \node at (.3,.05) {$\scriptstyle{\lambda}$};
\end{tikzpicture}
}&\text{if $d_{ij}=0$,}\\
 t_{ij}
\mathord{
\begin{tikzpicture}[baseline = 0]
	\draw[->,thick,darkred] (0.08,-.3) to (0.08,.4);
	\draw[->,thick,darkred] (-0.28,-.3) to (-0.28,.4);
   \node at (-0.28,-.4) {$\scriptstyle{i}$};
   \node at (0.08,-.4) {$\scriptstyle{j}$};
   \node at (.3,-.05) {$\scriptstyle{\lambda}$};
      \node at (-0.28,0.05) {$\color{darkred}\bullet$};
      \node at (-0.5,0.2) {$\color{darkred}\scriptstyle{d_{ij}}$};
\end{tikzpicture}
}
+
t_{ji}
\mathord{
\begin{tikzpicture}[baseline = 0]
	\draw[->,thick,darkred] (0.08,-.3) to (0.08,.4);
	\draw[->,thick,darkred] (-0.28,-.3) to (-0.28,.4);
   \node at (-0.28,-.4) {$\scriptstyle{i}$};
   \node at (0.08,-.4) {$\scriptstyle{j}$};
   \node at (.3,-.05) {$\scriptstyle{\lambda}$};
     \node at (0.08,0.05) {$\color{darkred}\bullet$};
     \node at (0.32,0.2) {$\color{darkred}\scriptstyle{d_{ji}}$};
\end{tikzpicture}
}
+\!\! \displaystyle\sum_{\substack{0 \leq p < d_{ij}\\0 \leq q <
    d_{ji}}} \!\!\!\!\!s_{ij}^{pq}
\mathord{
\begin{tikzpicture}[baseline = 0]
	\draw[->,thick,darkred] (0.08,-.3) to (0.08,.4);
	\draw[->,thick,darkred] (-0.28,-.3) to (-0.28,.4);
   \node at (-0.28,-.4) {$\scriptstyle{i}$};
   \node at (0.08,-.4) {$\scriptstyle{j}$};
   \node at (.3,-.05) {$\scriptstyle{\lambda}$};
      \node at (-0.28,0.05) {$\color{darkred}\bullet$};
      \node at (0.08,0.05) {$\color{darkred}\bullet$};
      \node at (-0.43,0.2) {$\color{darkred}\scriptstyle{p}$};
      \node at (0.22,0.2) {$\color{darkred}\scriptstyle{q}$};
\end{tikzpicture}
}
&\text{otherwise,}\\
\end{array}
\right. \label{now}
\end{align}
\begin{align}
\mathord{
\begin{tikzpicture}[baseline = 0]
	\draw[<-,thick,darkred] (0.45,.8) to (-0.45,-.4);
	\draw[->,thick,darkred] (0.45,-.4) to (-0.45,.8);
        \draw[-,thick,darkred] (0,-.4) to[out=90,in=-90] (-.45,0.2);
        \draw[->,thick,darkred] (-0.45,0.2) to[out=90,in=-90] (0,0.8);
   \node at (-0.45,-.45) {$\scriptstyle{i}$};
   \node at (0,-.45) {$\scriptstyle{j}$};
  \node at (0.45,-.45) {$\scriptstyle{k}$};
   \node at (.5,-.1) {$\scriptstyle{\lambda}$};
\end{tikzpicture}
}
\!-
\!\!\!
\mathord{
\begin{tikzpicture}[baseline = 0]
	\draw[<-,thick,darkred] (0.45,.8) to (-0.45,-.4);
	\draw[->,thick,darkred] (0.45,-.4) to (-0.45,.8);
        \draw[-,thick,darkred] (0,-.4) to[out=90,in=-90] (.45,0.2);
        \draw[->,thick,darkred] (0.45,0.2) to[out=90,in=-90] (0,0.8);
   \node at (-0.45,-.45) {$\scriptstyle{i}$};
   \node at (0,-.45) {$\scriptstyle{j}$};
  \node at (0.45,-.45) {$\scriptstyle{k}$};
   \node at (.5,-.1) {$\scriptstyle{\lambda}$};
\end{tikzpicture}
}
&=
\left\{
\begin{array}{ll}
\displaystyle
\sum_{\substack{r,s \geq 0 \\ r+s=d_{ij}-1}}
\!\!\!
t_{ij}
\!
\mathord{
\begin{tikzpicture}[baseline = 0]
	\draw[->,thick,darkred] (0.44,-.3) to (0.44,.4);
	\draw[->,thick,darkred] (0.08,-.3) to (0.08,.4);
	\draw[->,thick,darkred] (-0.28,-.3) to (-0.28,.4);
   \node at (-0.28,-.4) {$\scriptstyle{i}$};
   \node at (0.08,-.4) {$\scriptstyle{j}$};
   \node at (0.44,-.4) {$\scriptstyle{k}$};
  \node at (.6,-.1) {$\scriptstyle{\lambda}$};
     \node at (-0.28,0.05) {$\color{darkred}\bullet$};
     \node at (0.44,0.05) {$\color{darkred}\bullet$};
      \node at (-0.43,0.2) {$\color{darkred}\scriptstyle{r}$};
      \node at (0.55,0.2) {$\color{darkred}\scriptstyle{s}$};
\end{tikzpicture}
}
+ 
\!\!\sum_{\substack{0 \leq p < d_{ij}\\0 \leq q <
    d_{ji}\\r,s \geq 0\\r+s=p-1}}
\!\!\!\!s_{ij}^{pq}
\mathord{
\begin{tikzpicture}[baseline = 0]
	\draw[->,thick,darkred] (0.44,-.3) to (0.44,.4);
	\draw[->,thick,darkred] (0.08,-.3) to (0.08,.4);
	\draw[->,thick,darkred] (-0.28,-.3) to (-0.28,.4);
   \node at (-0.28,-.4) {$\scriptstyle{i}$};
   \node at (0.08,-.4) {$\scriptstyle{j}$};
   \node at (0.44,-.4) {$\scriptstyle{k}$};
  \node at (.6,-.1) {$\scriptstyle{\lambda}$};
     \node at (-0.28,0.05) {$\color{darkred}\bullet$};
     \node at (0.44,0.05) {$\color{darkred}\bullet$};
      \node at (-0.43,0.2) {$\color{darkred}\scriptstyle{r}$};
     \node at (0.55,0.2) {$\color{darkred}\scriptstyle{s}$};
     \node at (0.08,0.05) {$\color{darkred}\bullet$};
      \node at (0.2,0.2) {$\color{darkred}\scriptstyle{q}$};
\end{tikzpicture}
}
&\text{if $i=k \neq j$,}\\
0&\text{otherwise.}
\end{array}\label{qhalast}
\right.\end{align}
(Note in the above relations that we represent powers of $x$ by decorating the dot with a
multiplicity.)
Next we have the {\em right adjunction relations}
\begin{align}\label{rightadj}
\mathord{
\begin{tikzpicture}[baseline = 0]
  \draw[->,thick,darkred] (0.3,0) to (0.3,.4);
	\draw[-,thick,darkred] (0.3,0) to[out=-90, in=0] (0.1,-0.4);
	\draw[-,thick,darkred] (0.1,-0.4) to[out = 180, in = -90] (-0.1,0);
	\draw[-,thick,darkred] (-0.1,0) to[out=90, in=0] (-0.3,0.4);
	\draw[-,thick,darkred] (-0.3,0.4) to[out = 180, in =90] (-0.5,0);
  \draw[-,thick,darkred] (-0.5,0) to (-0.5,-.4);
   \node at (-0.5,-.5) {$\scriptstyle{i}$};
   \node at (0.5,0) {$\scriptstyle{\lambda}$};
\end{tikzpicture}
}
&=
\mathord{\begin{tikzpicture}[baseline=0]
  \draw[->,thick,darkred] (0,-0.4) to (0,.4);
   \node at (0,-.5) {$\scriptstyle{i}$};
   \node at (0.2,0) {$\scriptstyle{\lambda}$};
\end{tikzpicture}
},\qquad
\mathord{
\begin{tikzpicture}[baseline = 0]
  \draw[->,thick,darkred] (0.3,0) to (0.3,-.4);
	\draw[-,thick,darkred] (0.3,0) to[out=90, in=0] (0.1,0.4);
	\draw[-,thick,darkred] (0.1,0.4) to[out = 180, in = 90] (-0.1,0);
	\draw[-,thick,darkred] (-0.1,0) to[out=-90, in=0] (-0.3,-0.4);
	\draw[-,thick,darkred] (-0.3,-0.4) to[out = 180, in =-90] (-0.5,0);
  \draw[-,thick,darkred] (-0.5,0) to (-0.5,.4);
   \node at (-0.5,.5) {$\scriptstyle{i}$};
   \node at (0.5,0) {$\scriptstyle{\lambda}$};
\end{tikzpicture}
}
=
\mathord{\begin{tikzpicture}[baseline=0]
  \draw[<-,thick,darkred] (0,-0.4) to (0,.4);
   \node at (0,.5) {$\scriptstyle{i}$};
   \node at (0.2,0) {$\scriptstyle{\lambda}$};
\end{tikzpicture}
},
\end{align}
which imply that $F_i 1_{\lambda+\alpha_i}$ is
the right dual of $E_i 1_\lambda$.
Finally there are some {\em inversion relations}. To formulate these, we first
introduce a new $2$-morphism
\begin{equation}\label{sigrel}
\sigma =  
\mathord{
\begin{tikzpicture}[baseline = 0]
	\draw[<-,thick,darkred] (0.28,-.3) to (-0.28,.4);
	\draw[->,thick,darkred] (-0.28,-.3) to (0.28,.4);
   \node at (-0.28,-.4) {$\scriptstyle{j}$};
   \node at (-0.28,.5) {$\scriptstyle{i}$};
   \node at (.4,.05) {$\scriptstyle{\lambda}$};
\end{tikzpicture}
}
:=
\mathord{
\begin{tikzpicture}[baseline = 0]
	\draw[->,thick,darkred] (0.3,-.5) to (-0.3,.5);
	\draw[-,thick,darkred] (-0.2,-.2) to (0.2,.3);
        \draw[-,thick,darkred] (0.2,.3) to[out=50,in=180] (0.5,.5);
        \draw[->,thick,darkred] (0.5,.5) to[out=0,in=90] (0.8,-.5);
        \draw[-,thick,darkred] (-0.2,-.2) to[out=230,in=0] (-0.5,-.5);
        \draw[-,thick,darkred] (-0.5,-.5) to[out=180,in=-90] (-0.8,.5);
  \node at (-0.8,.6) {$\scriptstyle{i}$};
   \node at (0.28,-.6) {$\scriptstyle{j}$};
   \node at (1.05,.05) {$\scriptstyle{\lambda}$};
\end{tikzpicture}
}
:E_j F_i 1_\lambda \rightarrow F_i E_j 1_\lambda.
\end{equation}
Then we require that the following $2$-morphisms are isomorphisms:
\begin{align}\label{inv1}
\mathord{
\begin{tikzpicture}[baseline = 0]
	\draw[<-,thick,darkred] (0.28,-.3) to (-0.28,.4);
	\draw[->,thick,darkred] (-0.28,-.3) to (0.28,.4);
   \node at (-0.28,-.4) {$\scriptstyle{j}$};
   \node at (-0.28,.5) {$\scriptstyle{i}$};
   \node at (.4,.05) {$\scriptstyle{\lambda}$};
\end{tikzpicture}
}
&:E_j F_i 1_\lambda \stackrel{\sim}{\rightarrow} F_i E_j 1_\lambda
&\text{if $i \neq j$,}\\
\label{inv2}
\mathord{
\begin{tikzpicture}[baseline = 0]
	\draw[<-,thick,darkred] (0.28,-.3) to (-0.28,.4);
	\draw[->,thick,darkred] (-0.28,-.3) to (0.28,.4);
   \node at (-0.28,-.4) {$\scriptstyle{i}$};
   \node at (-0.28,.5) {$\scriptstyle{i}$};
   \node at (.4,.05) {$\scriptstyle{\lambda}$};
\end{tikzpicture}
}
\oplus
\bigoplus_{n=0}^{\langle h_i,\lambda\rangle-1}
\mathord{
\begin{tikzpicture}[baseline = 0]
	\draw[<-,thick,darkred] (0.4,0) to[out=90, in=0] (0.1,0.4);
	\draw[-,thick,darkred] (0.1,0.4) to[out = 180, in = 90] (-0.2,0);
    \node at (-0.2,-.1) {$\scriptstyle{i}$};
  \node at (0.3,0.5) {$\scriptstyle{\lambda}$};
      \node at (-0.3,0.2) {$\color{darkred}\scriptstyle{n}$};
      \node at (-0.15,0.2) {$\color{darkred}\bullet$};
\end{tikzpicture}
}
&:
E_i F_i 1_\lambda\stackrel{\sim}{\rightarrow}
F_i E_i 1_\lambda \oplus 1_\lambda^{\oplus \langle h_i,\lambda\rangle}
&\text{if $\langle h_i,\lambda\rangle \geq
  0$},\\
\mathord{
\begin{tikzpicture}[baseline = 0]
	\draw[<-,thick,darkred] (0.28,-.3) to (-0.28,.4);
	\draw[->,thick,darkred] (-0.28,-.3) to (0.28,.4);
   \node at (-0.28,-.4) {$\scriptstyle{i}$};
   \node at (-0.28,.5) {$\scriptstyle{i}$};
   \node at (.4,.05) {$\scriptstyle{\lambda}$};
\end{tikzpicture}
}
\oplus
\bigoplus_{n=0}^{-\langle h_i,\lambda\rangle-1}
\mathord{
\begin{tikzpicture}[baseline = 0]
	\draw[<-,thick,darkred] (0.4,0.2) to[out=-90, in=0] (0.1,-.2);
	\draw[-,thick,darkred] (0.1,-.2) to[out = 180, in = -90] (-0.2,0.2);
    \node at (-0.2,.3) {$\scriptstyle{i}$};
  \node at (0.3,-0.25) {$\scriptstyle{\lambda}$};
      \node at (0.55,0) {$\color{darkred}\scriptstyle{n}$};
      \node at (0.38,0) {$\color{darkred}\bullet$};
\end{tikzpicture}
}
&
:E_i F_i 1_\lambda \oplus 
1_\lambda^{\oplus -\langle h_i,\lambda\rangle}
\stackrel{\sim}{\rightarrow}
 F_i E_i 1_\lambda&\text{if $\langle h_i,\lambda\rangle \leq
  0$}.\label{inv3}
\end{align}
(This means formally that there are some additional 
as yet unnamed generators which serve as
two-sided inverses to the $2$-morphisms in (\ref{inv1})--(\ref{inv3}).)
\end{definition}

Our main theorem identifies the $2$-category
$\U(\mathfrak{g})$ just defined 
with the Khovanov-Lauda $2$-category from \cite{KL3}. 
Actually Khovanov and Lauda worked just with the choice of parameters in which 
$t_{ij}=1$ and $s_{ij}^{pq}=0$ always. Subsequently, Cautis and Lauda
\cite{CL} generalized the definition to
incorporate more general choices of these parameters as above.
By the ``Khovanov-Lauda $2$-category'' we really mean the more
general version from \cite{CL}.

\vspace{2mm}
\noindent
{\bf Main Theorem.}
{\em Rouquier's Kac-Moody $2$-category $\U(\mathfrak{g})$ 
is isomorphic to the Khovanov-Lauda $2$-category.}
\vspace{2mm}

The proof is an elementary relation chase.
To explain the strategy, recall that the
Khovanov-Lauda $2$-category 
has the same objects and $1$-morphisms as $\U(\mathfrak{g})$.
Then there are generating $2$-morphisms 
represented by the same diagrams as
$x, \tau, \eta$ and $\eps$ above, plus
additional generating $2$-morphisms $x':F_i 1_\lambda \rightarrow F_i 1_\lambda$,
$\tau':F_i F_j 1_\lambda \rightarrow F_j F_i 1_\lambda$,
$\eta':1_\lambda \rightarrow E_i F_i 1_\lambda$ and $\eps':F_i E_i
1_\lambda \rightarrow 1_\lambda$
represented diagrammatically by
\begin{align}\label{solid2}
x'
&= 
\mathord{
\begin{tikzpicture}[baseline = 0]
	\draw[<-,thick,darkred] (0.08,-.3) to (0.08,.4);
      \node at (0.08,0.1) {$\color{darkred}\bullet$};
   \node at (0.08,.5) {$\scriptstyle{i}$};
\end{tikzpicture}
}
{\scriptstyle\lambda}\:,
\qquad
\tau'
= 
\mathord{
\begin{tikzpicture}[baseline = 0]
	\draw[<-,thick,darkred] (0.28,-.3) to (-0.28,.4);
	\draw[<-,thick,darkred] (-0.28,-.3) to (0.28,.4);
   \node at (-0.28,.5) {$\scriptstyle{j}$};
   \node at (0.28,.5) {$\scriptstyle{i}$};
   \node at (.4,.05) {$\scriptstyle{\lambda}$};
\end{tikzpicture}
}\:,
\qquad
\eta'
= 
\mathord{
\begin{tikzpicture}[baseline = 0]
	\draw[-,thick,darkred] (0.4,0.3) to[out=-90, in=0] (0.1,-0.1);
	\draw[->,thick,darkred] (0.1,-0.1) to[out = 180, in = -90] (-0.2,0.3);
    \node at (0.4,.4) {$\scriptstyle{i}$};
  \node at (0.3,-0.15) {$\scriptstyle{\lambda}$};
\end{tikzpicture}
}\:,\qquad
\eps'
= 
\mathord{
\begin{tikzpicture}[baseline = 0]
	\draw[-,thick,darkred] (0.4,-0.1) to[out=90, in=0] (0.1,0.3);
	\draw[->,thick,darkred] (0.1,0.3) to[out = 180, in = 90] (-0.2,-0.1);
    \node at (0.4,-.2) {$\scriptstyle{i}$};
  \node at (0.3,0.4) {$\scriptstyle{\lambda}$};
\end{tikzpicture}
}.
\end{align}
These satisfy further relations which
we will recall in more detail later in the introduction.
It is evident that all of the defining
relations of
$\U(\mathfrak{g})$ recorded above are satisfied in the Khovanov-Lauda
$2$-category.
Hence there is a strict $\k$-linear $2$-functor from
$\U(\mathfrak{g})$
to the Khovanov-Lauda $2$-category which is the identity on objects
and $1$-morphisms, and maps the generating $2$-morphisms $x,\tau,\eta$ and $\eps$ to
the corresponding $2$-morphisms from \cite{KL3,CL}.

To see that this functor is an isomorphism,
we construct a two-sided inverse.
In order to do this, we need to identify appropriate
$2$-morphisms
$x',\tau',\eta'$ and $\eps'$ in $\U(\mathfrak{g})$ that will be the images
of the additional generators (\ref{solid2}) under the inverse functor.
The definitions of $\eta'$ and $\eps'$ that follow
are essentially the same as Rouquier's
``candidates'' for second adjunction from \cite[$\S$4.1.4]{Rou},
except that we have renormalized by the sign $(-1)^{\langle h_i,\lambda\rangle+1}$ 
in order to be consistent with the conventions of \cite{KL3, CL}.
We will also define
a {\em leftward crossing}
\begin{equation}
\sigma' = 
\mathord{
\right.
\end{align}
\end{lemma}

\begin{proof}
The first two relations follow from the definition (\ref{xp}) of the
downward dot using
the adjunction relations (\ref{rightadj}).
The second two follow
similarly from the definition of the rightward crossing
(\ref{sigrel}).
For (\ref{rtcross}), attach a rightward cap to the top right
strand and
a rightward cup to the bottom left strand of (\ref{qha}), then use (\ref{first})
and the
definition (\ref{sigrel}).
Finally for (\ref{lurking}), attach a rightward cap to the top right strand and a rightward cup to the
bottom left strand in (\ref{qhalast}), then use
(\ref{rightpitchfork}) and the definition of the rightward crossing.
\end{proof}

Taking notation from \cite{CL}, we
define new parameters from
\begin{equation}\label{primed}
{'t}_{ij} := t_{ji}^{-1}, 
\qquad
{'s}_{ij}^{pq} := t_{ij}^{-1} t_{ji}^{-1}
s_{ji}^{qp}.
\end{equation}
The next lemma explains the significance of these scalars.

\begin{lemma}\label{opposite}
The following relations hold:
\begin{align}\label{qhadown}
\mathord{
\begin{tikzpicture}[baseline = 0]
	\draw[->,thick,darkred] (0.25,.6) to (-0.25,-.2);
	\draw[<-,thick,darkred] (0.25,-.2) to (-0.25,.6);
  \node at (-0.25,.7) {$\scriptstyle{i}$};
   \node at (0.25,.7) {$\scriptstyle{j}$};
  \node at (.3,.25) {$\scriptstyle{\lambda}$};
      \node at (0.13,0.42) {$\color{darkred}\bullet$};
\end{tikzpicture}
}
-
\mathord{
\begin{tikzpicture}[baseline = 0]
	\draw[->,thick,darkred] (0.25,.6) to (-0.25,-.2);
	\draw[<-,thick,darkred] (0.25,-.2) to (-0.25,.6);
  \node at (-0.25,.7) {$\scriptstyle{i}$};
   \node at (0.25,.7) {$\scriptstyle{j}$};
  \node at (.3,.25) {$\scriptstyle{\lambda}$};
      \node at (-0.13,-0.02) {$\color{darkred}\bullet$};
\end{tikzpicture}
}
=
\mathord{
\begin{tikzpicture}[baseline = 0]
 	\draw[->,thick,darkred] (0.25,.6) to (-0.25,-.2);
	\draw[<-,thick,darkred] (0.25,-.2) to (-0.25,.6);
  \node at (-0.25,.7) {$\scriptstyle{i}$};
   \node at (0.25,.7) {$\scriptstyle{j}$};
  \node at (.3,.25) {$\scriptstyle{\lambda}$};
     \node at (0.13,-0.02) {$\color{darkred}\bullet$};
 \end{tikzpicture}
}
-
\mathord{
\begin{tikzpicture}[baseline = 0]
	\draw[->,thick,darkred] (0.25,.6) to (-0.25,-.2);
	\draw[<-,thick,darkred] (0.25,-.2) to (-0.25,.6);
  \node at (-0.25,.7) {$\scriptstyle{i}$};
   \node at (0.25,.7) {$\scriptstyle{j}$};
  \node at (.3,.25) {$\scriptstyle{\lambda}$};
      \node at (-0.13,0.42) {$\color{darkred}\bullet$};
\end{tikzpicture}
}
&=
\left\{
\begin{array}{ll}
\mathord{
\begin{tikzpicture}[baseline = 0]
 	\draw[<-,thick,darkred] (0.08,-.3) to (0.08,.4);
	\draw[<-,thick,darkred] (-0.28,-.3) to (-0.28,.4);
   \node at (-0.28,.5) {$\scriptstyle{i}$};
   \node at (0.08,.5) {$\scriptstyle{j}$};
 \node at (.28,.06) {$\scriptstyle{\lambda}$};
\end{tikzpicture}
}
&\text{if $i=j$,}\\
0&\text{otherwise,}\\
\end{array}
\right.
\end{align}
\begin{align}
\mathord{
\begin{tikzpicture}[baseline = 0]
	\draw[-,thick,darkred] (0.28,.4) to[out=90,in=-90] (-0.28,1.1);
	\draw[-,thick,darkred] (-0.28,.4) to[out=90,in=-90] (0.28,1.1);
	\draw[<-,thick,darkred] (0.28,-.3) to[out=90,in=-90] (-0.28,.4);
	\draw[<-,thick,darkred] (-0.28,-.3) to[out=90,in=-90] (0.28,.4);
  \node at (-0.28,1.2) {$\scriptstyle{i}$};
  \node at (0.28,1.2) {$\scriptstyle{j}$};
   \node at (.43,.4) {$\scriptstyle{\lambda}$};
\end{tikzpicture}
}
&=
\left\{
\begin{array}{ll}
0&\text{if $i=j$,}\\
{'}t_{ij}\mathord{
\begin{tikzpicture}[baseline = 0]
	\draw[<-,thick,darkred] (0.08,-.3) to (0.08,.4);
	\draw[<-,thick,darkred] (-0.28,-.3) to (-0.28,.4);
   \node at (-0.28,.5) {$\scriptstyle{i}$};
   \node at (0.08,.5) {$\scriptstyle{j}$};
   \node at (.3,.05) {$\scriptstyle{\lambda}$};
\end{tikzpicture}
}&\text{if $d_{ij}=0$,}\\
{'t}_{ij}
\mathord{
\begin{tikzpicture}[baseline = 0]
	\draw[<-,thick,darkred] (0.08,-.3) to (0.08,.4);
	\draw[<-,thick,darkred] (-0.28,-.3) to (-0.28,.4);
   \node at (-0.28,.5) {$\scriptstyle{i}$};
   \node at (0.08,.5) {$\scriptstyle{j}$};
   \node at (.3,-.05) {$\scriptstyle{\lambda}$};
      \node at (-0.28,0.05) {$\color{darkred}\bullet$};
      \node at (-0.5,0.2) {$\color{darkred}\scriptstyle{d_{ij}}$};
\end{tikzpicture}
}
+
{'t}_{ji}
\mathord{
\begin{tikzpicture}[baseline = 0]
	\draw[<-,thick,darkred] (0.08,-.3) to (0.08,.4);
	\draw[<-,thick,darkred] (-0.28,-.3) to (-0.28,.4);
   \node at (-0.28,.5) {$\scriptstyle{i}$};
   \node at (0.08,.5) {$\scriptstyle{j}$};
   \node at (.3,-.05) {$\scriptstyle{\lambda}$};
     \node at (0.08,0.05) {$\color{darkred}\bullet$};
     \node at (0.32,0.2) {$\color{darkred}\scriptstyle{d_{ji}}$};
\end{tikzpicture}
}
+\!\! \displaystyle\sum_{\substack{0 \leq p < d_{ij}\\0 \leq q <
    d_{ji}}} \!\!\!\!\!{'s}_{ij}^{pq}
\mathord{
\begin{tikzpicture}[baseline = 0]
	\draw[<-,thick,darkred] (0.08,-.3) to (0.08,.4);
	\draw[<-,thick,darkred] (-0.28,-.3) to (-0.28,.4);
   \node at (-0.28,.5) {$\scriptstyle{i}$};
   \node at (0.08,.5) {$\scriptstyle{j}$};
   \node at (.3,-.05) {$\scriptstyle{\lambda}$};
      \node at (-0.28,0.05) {$\color{darkred}\bullet$};
      \node at (0.08,0.05) {$\color{darkred}\bullet$};
      \node at (-0.43,0.2) {$\color{darkred}\scriptstyle{p}$};
      \node at (0.22,0.2) {$\color{darkred}\scriptstyle{q}$};
\end{tikzpicture}
}
&\text{otherwise,}\\
\end{array}
\right.
\end{align}\begin{align}
\mathord{
\begin{tikzpicture}[baseline = 0]
	\draw[->,thick,darkred] (0.45,.8) to (-0.45,-.4);
	\draw[<-,thick,darkred] (0.45,-.4) to (-0.45,.8);
        \draw[<-,thick,darkred] (0,-.4) to[out=90,in=-90] (.45,0.2);
        \draw[-,thick,darkred] (0.45,0.2) to[out=90,in=-90] (0,0.8);
   \node at (-0.45,.9) {$\scriptstyle{i}$};
   \node at (0,.9) {$\scriptstyle{j}$};
  \node at (0.45,.9) {$\scriptstyle{k}$};
   \node at (.5,-.1) {$\scriptstyle{\lambda}$};
\end{tikzpicture}
}
\!-
\!\!\!
\mathord{
\begin{tikzpicture}[baseline = 0]
	\draw[->,thick,darkred] (0.45,.8) to (-0.45,-.4);
	\draw[<-,thick,darkred] (0.45,-.4) to (-0.45,.8);
        \draw[<-,thick,darkred] (0,-.4) to[out=90,in=-90] (-.45,0.2);
        \draw[-,thick,darkred] (-0.45,0.2) to[out=90,in=-90] (0,0.8);
   \node at (-0.45,.9) {$\scriptstyle{i}$};
   \node at (0,.9) {$\scriptstyle{j}$};
  \node at (0.45,.9) {$\scriptstyle{k}$};
   \node at (.5,-.1) {$\scriptstyle{\lambda}$};
\end{tikzpicture}
}
&=
\left\{
\begin{array}{ll}
\!\!\displaystyle
\sum_{\substack{r,s \geq 0 \\ r+s=d_{ij}-1}}
\!\!\!
{'t}_{ij}
\!
\mathord{
\begin{tikzpicture}[baseline = 0]
	\draw[<-,thick,darkred] (0.44,-.3) to (0.44,.4);
	\draw[<-,thick,darkred] (0.08,-.3) to (0.08,.4);
	\draw[<-,thick,darkred] (-0.28,-.3) to (-0.28,.4);
   \node at (-0.28,.5) {$\scriptstyle{i}$};
   \node at (0.08,.5) {$\scriptstyle{j}$};
   \node at (0.44,.5) {$\scriptstyle{k}$};
  \node at (.6,-.1) {$\scriptstyle{\lambda}$};
     \node at (-0.28,0.05) {$\color{darkred}\bullet$};
     \node at (0.44,0.05) {$\color{darkred}\bullet$};
      \node at (-0.43,0.2) {$\color{darkred}\scriptstyle{r}$};
      \node at (0.55,0.2) {$\color{darkred}\scriptstyle{s}$};
\end{tikzpicture}
}
+ 
\!\!\!\sum_{\substack{0 \leq p < d_{ij}\\0 \leq q <
    d_{ji}\\r,s \geq 0\\r+s=p-1}}
\!\!\!\!{'s}_{ij}^{pq}
\mathord{
\begin{tikzpicture}[baseline = 0]
	\draw[<-,thick,darkred] (0.44,-.3) to (0.44,.4);
	\draw[<-,thick,darkred] (0.08,-.3) to (0.08,.4);
	\draw[<-,thick,darkred] (-0.28,-.3) to (-0.28,.4);
   \node at (-0.28,.5) {$\scriptstyle{i}$};
   \node at (0.08,.5) {$\scriptstyle{j}$};
   \node at (0.44,.5) {$\scriptstyle{k}$};
  \node at (.6,-.1) {$\scriptstyle{\lambda}$};
     \node at (-0.28,0.05) {$\color{darkred}\bullet$};
     \node at (0.44,0.05) {$\color{darkred}\bullet$};
      \node at (-0.43,0.2) {$\color{darkred}\scriptstyle{r}$};
     \node at (0.55,0.2) {$\color{darkred}\scriptstyle{s}$};
     \node at (0.08,0.05) {$\color{darkred}\bullet$};
      \node at (0.2,0.2) {$\color{darkred}\scriptstyle{q}$};
\end{tikzpicture}
}\!\!
&\text{if $i=k \neq j$,}\\
\!\!0&\text{otherwise.}
\end{array}
\right.\end{align}
\end{lemma}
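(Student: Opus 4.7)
The plan is to derive each downward relation from its upward counterpart in Definition \ref{def1} by attaching rightward cups and caps to rotate the strands, in the same spirit as the proof of Lemma \ref{skelling}. Whereas a single rotation applied to (\ref{qha}) and (\ref{qhalast}) produced the ``half-downward'' relations (\ref{rtcross}) and (\ref{lurking}) involving the rightward crossing $\sigma$, a second rotation on each remaining upward strand now produces the fully downward versions.

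More concretely, for (\ref{qhadown}), I would start from (\ref{rtcross}) and attach a rightward cap and cup to the one remaining upward strand. The pitchfork relation (\ref{rightpitchfork}) moves the crossing past the bend, and (\ref{first}) slides the dots along it, so that every rightward crossing appearing in the result can be rewritten as $t_{ij}$ times a downward $\tau'$ via definition (\ref{tp}). Rescaling gives (\ref{qhadown}); the $i=j$ term on the right-hand side of (\ref{qha}) (two parallel upward strands) rotates directly to two parallel downward strands, and no extra scalar appears since the original coefficient is $1$.

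For the second, ``crossing-squared'' relation, the same double rotation applied to (\ref{now}) converts each upward $\tau$ into $t_{ij}$ times a downward $\tau'$, so the rotated left-hand side equals $t_{ij}^2$ times the desired left-hand side. The coefficients on the right-hand side must then be tracked carefully: under the rotation the multiplicity-$d_{ij}$ dot slides to the strand that played the opposite role at the top of the diagram, and after dividing by $t_{ij}^2$ one checks that the original coefficient $t_{ij}$ becomes $t_{ji}^{-1}$ and that the $s_{ij}^{pq}$-terms become $t_{ij}^{-1}t_{ji}^{-1}s_{ji}^{qp}$, which are precisely ${'t}_{ij}$ and ${'s}_{ij}^{pq}$ as defined in (\ref{primed}). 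The downward braid relation is obtained by the same procedure applied to (\ref{qhalast}), with (\ref{lurking}) serving as the intermediate half-rotated step.

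The main obstacle is the scalar bookkeeping: the normalization factor $t_{ij}^{-1}$ built into the definition (\ref{tp}) of $\tau'$ must be combined with the apparent swap of the roles of $i$ and $j$ in the positions of the scalars under rotation, and one must verify that these two effects conspire precisely to produce the primed parameters of (\ref{primed}). The diagrammatic moves themselves are routine given Lemma \ref{skelling} and the adjunction relations (\ref{rightadj}), so the content of the proof is essentially this scalar check.
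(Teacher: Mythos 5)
Your proposal follows essentially the same route as the paper: the paper's proof is precisely to put rightward caps on top and rightward cups on the bottom of (\ref{qha})--(\ref{qhalast}) and then invoke (\ref{rightadj}), (\ref{sigrel}), (\ref{xp}), (\ref{tp}) and (\ref{first})--(\ref{rightpitchfork}); your two-stage rotation through (\ref{rtcross}) and (\ref{lurking}) is just the same computation organized in two steps. One small correction to your scalar bookkeeping: the two crossings making up the upward double crossing carry their labels in opposite orders, so their right mates contribute $t_{ij}$ and $t_{ji}$ respectively, and the rotated left-hand side is $t_{ij}t_{ji}$ (not $t_{ij}^2$) times the downward double crossing; dividing by $t_{ij}t_{ji}$ is exactly what turns $t_{ij}$ into $t_{ji}^{-1}={'t}_{ij}$ and $s_{ij}^{pq}$ into $t_{ij}^{-1}t_{ji}^{-1}s_{ij}^{pq}={'s}_{ij}^{pq}$ (using $s_{ij}^{pq}=s_{ji}^{qp}$), so your stated final coefficients are correct.
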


\begin{proof}
Put rightward caps on the top and rightward cups on the bottom of the
relations (\ref{qha})--(\ref{qhalast}), then use (\ref{rightadj}),
the definitions (\ref{sigrel}), (\ref{xp}), (\ref{tp}), and (\ref{first})--(\ref{rightpitchfork}).
\end{proof}

For any strict $\k$-linear $2$-category $\mathcal C$, we write
$\mathcal C^{\operatorname{opp}}$ for the $2$-category with the same
objects as $\mathcal C$
but with morphism categories defined from
$\mathcal{H}om_{\mathcal C^{\operatorname{opp}}}(\lambda,\mu) :=
\mathcal{H}om_{\mathcal C}(\lambda,\mu)^{\operatorname{opp}}$, 
so the vertical composition in $\mathcal C^{\operatorname{opp}}$ is the
opposite of the one in $\mathcal C$, while
the horizontal composition in $\mathcal C^{\operatorname{opp}}$ is
the same as in $\mathcal C$.

\begin{theorem}\label{opiso}
Let ${'\U}(\mathfrak{g})$ be the Kac-Moody $2$-category
defined as in Definition~\ref{def1} but using the primed 
parameters from (\ref{primed}) in place of $t_{ij}$ and $s_{ij}^{pq}$.
Then there is an isomorphism of strict $\k$-linear $2$-categories
$$
\T:{'\U}(\mathfrak{g}) \stackrel{\sim}{\rightarrow}
\U(\mathfrak{g})^{\operatorname{opp}}
$$
defined on objects by $\T(\lambda) := -\lambda$, on generating
$1$-morphisms
by $\T(E_i 1_\lambda) := F_i 1_{-\lambda}$ and
$\T(F_i 1_\lambda) := E_i 1_{-\lambda}$, and on generating
$2$-morphisms by:
\begin{align*}
\mathord{
\begin{tikzpicture}[baseline = 0]
	\draw[->,thick,darkred] (0.08,-.3) to (0.08,.4);
      \node at (0.08,0.05) {$\color{darkred}\bullet$};
   \node at (0.08,-.4) {$\scriptstyle{i}$};
\end{tikzpicture}
}
{\scriptstyle\lambda}\:\,\mapsto
\mathord{
\begin{tikzpicture}[baseline = 0]
	\draw[<-,thick,darkred] (0.08,-.3) to (0.08,.4);
      \node at (0.08,0.1) {$\color{darkred}\bullet$};
     \node at (0.08,.5) {$\scriptstyle{i}$};
\end{tikzpicture}
}
{\scriptstyle-\lambda}\:,
\qquad
\mathord{
\begin{tikzpicture}[baseline = 0]
	\draw[->,thick,darkred] (0.28,-.3) to (-0.28,.4);
	\draw[->,thick,darkred] (-0.28,-.3) to (0.28,.4);
   \node at (-0.28,-.4) {$\scriptstyle{i}$};
   \node at (0.28,-.4) {$\scriptstyle{j}$};
   \node at (.4,.05) {$\scriptstyle{\lambda}$};
\end{tikzpicture}
}&\mapsto
-\mathord{
\begin{tikzpicture}[baseline = 0]
	\draw[<-,thick,darkred] (0.28,-.3) to (-0.28,.4);
	\draw[<-,thick,darkred] (-0.28,-.3) to (0.28,.4);
   \node at (-0.28,.5) {$\scriptstyle{i}$};
   \node at (0.28,.5) {$\scriptstyle{j}$};
   \node at (.4,.05) {$\scriptstyle{-\lambda}$};
\end{tikzpicture}
},
\qquad
\mathord{
\begin{tikzpicture}[baseline = 0]
	\draw[<-,thick,darkred] (0.4,0.3) to[out=-90, in=0] (0.1,-0.1);
	\draw[-,thick,darkred] (0.1,-0.1) to[out = 180, in = -90] (-0.2,0.3);
    \node at (-0.2,.4) {$\scriptstyle{i}$};
  \node at (0.3,-0.15) {$\scriptstyle{\lambda}$};
\end{tikzpicture}
}
\mapsto
\!\mathord{
\begin{tikzpicture}[baseline = 0]
	\draw[<-,thick,darkred] (0.4,-0.1) to[out=90, in=0] (0.1,0.3);
	\draw[-,thick,darkred] (0.1,0.3) to[out = 180, in = 90] (-0.2,-0.1);
    \node at (-0.2,-.2) {$\scriptstyle{i}$};
  \node at (0.3,0.4) {$\scriptstyle{-\lambda}$};
\end{tikzpicture}
},
\qquad
\mathord{
\begin{tikzpicture}[baseline = 0]
	\draw[<-,thick,darkred] (0.4,-0.1) to[out=90, in=0] (0.1,0.3);
	\draw[-,thick,darkred] (0.1,0.3) to[out = 180, in = 90] (-0.2,-0.1);
    \node at (-0.2,-.2) {$\scriptstyle{i}$};
  \node at (0.3,0.4) {$\scriptstyle{\lambda}$};
\end{tikzpicture}
}\mapsto
\!\mathord{
\begin{tikzpicture}[baseline = 0]
	\draw[<-,thick,darkred] (0.4,0.3) to[out=-90, in=0] (0.1,-0.1);
	\draw[-,thick,darkred] (0.1,-0.1) to[out = 180, in = -90] (-0.2,0.3);
    \node at (-0.2,.4) {$\scriptstyle{i}$};
  \node at (0.32,-0.18) {$\scriptstyle{-\lambda}$};
\end{tikzpicture}
}.\\\intertext{The effect of $\T$ on the other named $2$-morphisms in
${'\U}(\mathfrak{g})$ is as follows:}
\mathord{
\begin{tikzpicture}[baseline = 0]
	\draw[<-,thick,darkred] (0.08,-.3) to (0.08,.4);
      \node at (0.08,0.1) {$\color{darkred}\bullet$};
     \node at (0.08,.5) {$\scriptstyle{i}$};
\end{tikzpicture}
}
{\scriptstyle\lambda}
\:\,\mapsto
\mathord{
\begin{tikzpicture}[baseline = 0]
	\draw[->,thick,darkred] (0.08,-.3) to (0.08,.4);
      \node at (0.08,0.05) {$\color{darkred}\bullet$};
   \node at (0.08,-.4) {$\scriptstyle{i}$};
\end{tikzpicture}
}
{\scriptstyle-\lambda},
\qquad
\mathord{
\begin{tikzpicture}[baseline = 0]
	\draw[<-,thick,darkred] (0.28,-.3) to (-0.28,.4);
	\draw[<-,thick,darkred] (-0.28,-.3) to (0.28,.4);
   \node at (-0.28,.5) {$\scriptstyle{j}$};
   \node at (0.28,.5) {$\scriptstyle{i}$};
   \node at (.4,.05) {$\scriptstyle{\lambda}$};
\end{tikzpicture}}&\mapsto
-
\mathord{
\begin{tikzpicture}[baseline = 0]
	\draw[->,thick,darkred] (0.28,-.3) to (-0.28,.4);
	\draw[->,thick,darkred] (-0.28,-.3) to (0.28,.4);
   \node at (-0.28,-.4) {$\scriptstyle{j}$};
   \node at (0.28,-.4) {$\scriptstyle{i}$};
   \node at (.4,.05) {$\scriptstyle{-\lambda}$};
\end{tikzpicture}
},\qquad
\mathord{
\begin{tikzpicture}[baseline = 0]
	\draw[-,thick,darkred] (0.4,0.3) to[out=-90, in=0] (0.1,-0.1);
	\draw[->,thick,darkred] (0.1,-0.1) to[out = 180, in = -90] (-0.2,0.3);
    \node at (0.4,.4) {$\scriptstyle{i}$};
  \node at (0.3,-0.15) {$\scriptstyle{\lambda}$};
\end{tikzpicture}
}
\mapsto
\:\mathord{
\begin{tikzpicture}[baseline = 0]
	\draw[-,thick,darkred] (0.4,-0.1) to[out=90, in=0] (0.1,0.3);
	\draw[->,thick,darkred] (0.1,0.3) to[out = 180, in = 90] (-0.2,-0.1);
    \node at (0.4,-.2) {$\scriptstyle{i}$};
  \node at (0.3,0.4) {$\scriptstyle{-\lambda}$};
\end{tikzpicture}
},
\qquad
\mathord{
\begin{tikzpicture}[baseline = 0]
	\draw[-,thick,darkred] (0.4,-0.1) to[out=90, in=0] (0.1,0.3);
	\draw[->,thick,darkred] (0.1,0.3) to[out = 180, in = 90] (-0.2,-0.1);
    \node at (0.4,-.2) {$\scriptstyle{i}$};
  \node at (0.3,0.4) {$\scriptstyle{\lambda}$};
\end{tikzpicture}
}\mapsto
\:\mathord{
\begin{tikzpicture}[baseline = 0]
	\draw[-,thick,darkred] (0.4,0.3) to[out=-90, in=0] (0.1,-0.1);
	\draw[->,thick,darkred] (0.1,-0.1) to[out = 180, in = -90] (-0.2,0.3);
    \node at (0.4,.4) {$\scriptstyle{i}$};
  \node at (0.32,-0.18) {$\scriptstyle{-\lambda}$};
\end{tikzpicture}
},\\
\mathord{
\begin{tikzpicture}[baseline = 0]
	\draw[<-,thick,darkred] (0.28,-.3) to (-0.28,.4);
	\draw[->,thick,darkred] (-0.28,-.3) to (0.28,.4);
   \node at (-0.28,-.4) {$\scriptstyle{i}$};
   \node at (-0.28,.5) {$\scriptstyle{j}$};
   \node at (.4,.05) {$\scriptstyle{\lambda}$};
\end{tikzpicture}
}
&\mapsto
-t_{ij}^{-1}
\mathord{
\begin{tikzpicture}[baseline = 0]
	\draw[<-,thick,darkred] (0.28,-.3) to (-0.28,.4);
	\draw[->,thick,darkred] (-0.28,-.3) to (0.28,.4);
   \node at (-0.28,-.4) {$\scriptstyle{j}$};
   \node at (-0.28,.5) {$\scriptstyle{i}$};
   \node at (.4,.05) {$\scriptstyle{-\lambda}$};
\end{tikzpicture}
},
\qquad
\mathord{
\begin{tikzpicture}[baseline = 0]
	\draw[->,thick,darkred] (0.28,-.3) to (-0.28,.4);
	\draw[<-,thick,darkred] (-0.28,-.3) to (0.28,.4);
   \node at (0.28,-.4) {$\scriptstyle{i}$};
   \node at (0.28,.5) {$\scriptstyle{j}$};
   \node at (.4,.05) {$\scriptstyle{\lambda}$};
\end{tikzpicture}
}
\mapsto
-
t_{ij}\:
\mathord{
\begin{tikzpicture}[baseline = 0]
	\draw[->,thick,darkred] (0.28,-.3) to (-0.28,.4);
	\draw[<-,thick,darkred] (-0.28,-.3) to (0.28,.4);
   \node at (0.28,-.4) {$\scriptstyle{j}$};
   \node at (0.28,.5) {$\scriptstyle{i}$};
   \node at (.4,.05) {$\scriptstyle{-\lambda}$};
\end{tikzpicture}
}.
\end{align*}
Note in particular that $\T^2 = \operatorname{id}$.
\end{theorem}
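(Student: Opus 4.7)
The plan is to verify that $\T$ defines a well-defined strict $\k$-linear 2-functor by checking that its values on the generating 2-morphisms satisfy the defining relations of ${'\U}(\mathfrak g)$ inside $\U(\mathfrak g)^{\operatorname{opp}}$. Once this is done, the equality $\T^2 = \id$ will follow immediately, since the assignments on generators are manifestly involutive; this observation also exhibits $\T^{-1}$. The relations to verify are the three groups appearing in Definition \ref{def1}.

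First I would handle the quiver Hecke relations (\ref{qha})--(\ref{qhalast}) written with parameters $'t_{ij}$ and $'s_{ij}^{pq}$. Under $\T$, the upward dot maps to the downward dot at weight $-\lambda$, while the upward crossing maps to $-1$ times the downward crossing at weight $-\lambda$. After unwinding $\U(\mathfrak g)^{\operatorname{opp}}$ (which reverses vertical composition), the resulting equations in $\U(\mathfrak g)$ are exactly the downward quiver Hecke relations established in Lemma \ref{opposite}, with the primed parameters from (\ref{primed}) appearing on both sides. Next I would address the right adjunction (\ref{rightadj}): because $\T$ interchanges $\eta \leftrightarrow \eps$ without sign and swaps $E_i \leftrightarrow F_i$ on 1-morphisms, the two zigzag identities in ${'\U}(\mathfrak g)$ at weight $\lambda$ translate, after reversing composition order in the opposite, into the same two zigzag identities in $\U(\mathfrak g)$ at weight $-\lambda$, which hold by (\ref{rightadj}). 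Finally I would check the inversion relations: $\T$ sends the rightward crossing in (\ref{inv1}) to $-t_{ij}^{-1}$ times the rightward crossing at $-\lambda$, a unit multiple of an isomorphism, and for (\ref{inv2}) at $\lambda$ with $\langle h_i,\lambda\rangle \ge 0$ the negation gives $\langle h_i,-\lambda\rangle \le 0$, so the image of the column vector in (\ref{inv2}) becomes, up to signs and the explicit formulas for $\T$ on dotted cups/caps, the row vector of (\ref{inv3}) at $-\lambda$; isomorphism in $\U(\mathfrak g)$ yields isomorphism in $\U(\mathfrak g)^{\operatorname{opp}}$, and the case $\langle h_i,\lambda\rangle \le 0$ is symmetric.

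The formulas in the second half of the table, describing $\T$ on the derived 2-morphisms $x'$, $\tau'$, and the leftward/rightward crossings, will follow by applying $\T$ to the right-hand sides of the defining equations (\ref{xp}), (\ref{tp}), (\ref{sigrel}), straightening the rotated diagrams using the right adjunction (\ref{rightadj}) and Lemma \ref{skelling}, and collecting the $t_{ij}^{\pm 1}$ scalars. The hard part will be the sign and scalar bookkeeping in the quiver Hecke step: the $(-1)$ picked up by each crossing image, the factors $t_{ij}^{-1}$ and $t_{ji}^{-1}$ hidden inside the primed parameters, and the reversal of composition order in $\U(\mathfrak g)^{\operatorname{opp}}$ must all conspire to reproduce Lemma \ref{opposite} precisely. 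Once this accounting is in hand, the remaining checks are essentially formal.
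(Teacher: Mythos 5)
Your proposal is correct and follows essentially the same route as the paper: quiver Hecke relations via Lemma~\ref{opposite}, right adjunction being immediate, the inversion relations by observing that the image of the column vector in (\ref{inv2}) is the row vector of (\ref{inv3}) at $-\lambda$ composed with a sign matrix (the paper uses $\operatorname{diag}(-1,1,\dots,1)$ together with (\ref{first})), and the remaining formulas by a routine check from the definitions (\ref{xp})--(\ref{hospitalL}). No substantive differences.
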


\begin{proof}
To see that $\T$ is well defined we need to verify that the 
images
under $\T$ of the relations
(\ref{qha})--(\ref{rightadj}) and (\ref{inv1})--(\ref{inv3})
with primed parameters hold in $\U(\mathfrak{g})^{\operatorname{opp}}$. For the first
three, this follows from Lemma~\ref{opposite}, while (\ref{rightadj})
is clear.
For the remaining ones, we first note that
$\mathord{
\begin{tikzpicture}[baseline = 0]
	\draw[<-,thick,darkred] (0.28,-.3) to (-0.28,.4);
	\draw[->,thick,darkred] (-0.28,-.3) to (0.28,.4);
   \node at (-0.28,-.4) {$\scriptstyle{i}$};
   \node at (-0.28,.5) {$\scriptstyle{i}$};
   \node at (.4,.05) {$\scriptstyle{\lambda}$};
\end{tikzpicture}
}\mapsto
-\mathord{
\begin{tikzpicture}[baseline = 0]
	\draw[<-,thick,darkred] (0.28,-.3) to (-0.28,.4);
	\draw[->,thick,darkred] (-0.28,-.3) to (0.28,.4);
   \node at (-0.28,-.4) {$\scriptstyle{i}$};
   \node at (-0.28,.5) {$\scriptstyle{i}$};
   \node at (.4,.05) {$\scriptstyle{-\lambda}$};
\end{tikzpicture}
}$
by the definition (\ref{sigrel}).
Then for example for (\ref{inv2}), we must show for $\langle
h_i,\lambda \rangle \geq 0$ that
$\mathord{
\begin{tikzpicture}[baseline = 0]
	\draw[<-,thick,darkred] (0.28,-.3) to (-0.28,.4);
	\draw[->,thick,darkred] (-0.28,-.3) to (0.28,.4);
   \node at (-0.28,-.4) {$\scriptstyle{i}$};
   \node at (-0.28,.5) {$\scriptstyle{i}$};
   \node at (.4,.05) {$\scriptstyle{-\lambda}$};
\end{tikzpicture}
}
\oplus
\displaystyle
\bigoplus_{n=0}^{\langle h_i,\lambda\rangle-1}
\mathord{
\begin{tikzpicture}[baseline = 0]
	\draw[<-,thick,darkred] (0.4,0.2) to[out=-90, in=0] (0.1,-.2);
	\draw[-,thick,darkred] (0.1,-.2) to[out = 180, in = -90] (-0.2,0.2);
    \node at (-0.2,.3) {$\scriptstyle{i}$};
  \node at (0.32,-0.28) {$\scriptstyle{-\lambda}$};
      \node at (0.55,0) {$\color{darkred}\scriptstyle{n}$};
      \node at (-0.15,0) {$\color{darkred}\bullet$};
\end{tikzpicture}
}$
is invertible in $\U(\mathfrak{g})$. This follows by composing (\ref{inv3})
with $\operatorname{diag}(-1,1,\dots,1)$, 
using also (\ref{first}).
The rest of the theorem is a routine check from the definitions (\ref{xp})--(\ref{hospitalL}).
\end{proof}
 
We will often appeal to Theorem~\ref{opiso} to establish mirror images of
relations in a horizontal axis. For example, applying it to
(\ref{rightpitchfork}), we obtain the following relation (which could
also be deduced directly from the definition of the
downward crossing):

\begin{corollary}
The following relations hold:
\begin{align}
\mathord{

}
&&\text{if $0 \leq n \leq \langle h_i,\lambda\rangle$.}\label{dog1b}
\end{align}
\end{lemma}

\begin{proof}
Proceed by induction on $n$.
For the base case, 
convert the upward crossings to rightward ones
using (\ref{rightadj})--(\ref{sigrel}), apply (\ref{everything}) and (\ref{ig1})--(\ref{ig2}), then invoke
(\ref{rightadj}).
For the induction step, pull a dot past the crossing using (\ref{first})
and (\ref{qha}),
then use (\ref{startb})--(\ref{startd}) and the induction hypothesis.
\end{proof}

\begin{lemma}
Then the following relations hold:
\begin{align}\label{one}
\mathord{

}.
\end{align}
\end{theorem}

\begin{proof}
For (\ref{fourth}), we already proved the left hand relation when $\langle h_i,\lambda\rangle
\geq 1$ in (\ref{huddle}). 
Now take this relation with $\lambda$ replaced by $\lambda+\alpha_i$, 
attach leftward caps to the top left and top right strands, then apply (\ref{adjfinal})
to prove the right hand relation when $\langle h_i,\lambda \rangle
\geq -1$. Finally apply Theorem~\ref{opiso} to the cases established so far
to get the right hand relation when
$\langle h_i,\lambda\rangle \leq -1$
and the left hand relation when $\langle h_i,\lambda\rangle \leq 1$.

The proofs of (\ref{easyfinal})--(\ref{hardfinal}) follow by a similar
strategy to the previous paragraph, starting from (\ref{one})--(\ref{two}) and
(\ref{easy1})--(\ref{easy2}).
\end{proof}

The final set of relations (\ref{actress})
needed to complete the proof of the Main Theorem
follow easily from (\ref{fourth})--(\ref{hardfinal}) using also (\ref{adjfinal}).


\begin{thebibliography}{BCNR}
\bibitem[CL]{CL}
S. Cautis and A. Lauda,
Implicit structure in 2-representations of quantum groups, to appear
in {\em Selecta Math.}.

\bibitem[KL]{KL3}
M. Khovanov and A. Lauda,
 A diagrammatic approach to categorification of quantum groups III,
{\em Quantum Top.} {\bf 1} (2010), 1--92.

\bibitem[R]{Rou}
R. Rouquier, 
2-Kac-Moody algebras;
\arxiv{0812.5023}.
\end{thebibliography}
\end{document}